\definecolor{citegreen}{rgb}{0,0.6,0}
\definecolor{refred}{rgb}{0.8,0,0}
\theoremstyle{plain}
\newtheorem{thm}{Theorem}[section]
\newtheorem{proposition}[thm]{Proposition}
\newtheorem{theorem}[thm]{Theorem}
\newtheorem{ackn}{Acknowledgement\!\!}
\theoremstyle{remark}
\newtheorem{remark}[thm]{Remark}
\theoremstyle{definition}
\newtheorem{definition}[thm]{Definition}
\numberwithin{equation}{section}
\newcommand{\R}{\mathbb{R}}
\def\Z{\mathbb Z}
\newcommand{\SSS}{\mathbb S}
\newcommand{\RRR}{{\mathrm R}}
\newcommand{\Ric}{{\mathrm {Ric}}}
\DeclarePairedDelimiter{\abs}{\lvert}{\rvert}
\newcommand{\HHH}{\mathrm{H}} 
\newcommand{\na}{\nabla}
\newcommand{\medint}{-\kern -,375cm\int}
\newcommand{\medintinrigo}{-\kern -,315cm\int}
\newcommand{\beq}{\begin{equation}}
\newcommand{\eeq}{\end{equation}}
\def\ringg#1{\accentset{\circ}{#1}}
\begin{document}
\title[Area, Volume and Capacity in $3$--manifolds with non--negative scalar curvature]{Area, Volume and Capacity in non--compact $3$--manifolds with non--negative scalar curvature}

\author[F.~Oronzio]{Francesca Oronzio}
\address{F.~Oronzio,  Scuola Superiore Meridionale, Largo S. Marcellino, 10, 80138, Napoli, Italy.}
\email{f.oronzio@ssmeridionale.it}

\begin{abstract}
Let $(M,g)$ be a $3$--dimensional, complete, one--ended Riemannian manifold, with a minimal, compact and connected boundary. We assume that $M$ has a simple topology and that the scalar curvature of $(M,g)$ is non--negative. 
Moreover, we suppose that $(M,g)$ admits a $2$--capacitary potential $v$ with $v,\,\vert \nabla v\vert\to 0$ at infinity.
In this note, we provide a gradient integral estimate for the level sets of the function $u=1-v$.
This estimate leads to a sharp volume comparison for the sub--level sets of $u$, and a sharp area comparison of the level sets of $u$. From this last comparison it follows a sharp area--capacity inequality,
originally derived by Bray and Miao in \cite{bray2}, thereby extending its cases of validity.
This work is based on the recent paper \cite{Col_Min_5} by Colding and Minicozzi. Finally, for completeness, we also show the same type of area and volume comparison, in the case where $(M,g)$ has no boundary, replacing the function $u$ with one related to the minimal positive Green's function. This volume comparison leads to a more geometric proof of the positive mass inequality than the one given in \cite{Ago_Maz_Oro_2}.
\end{abstract}

\maketitle

%\tableofcontents

\section{Introduction}

Recently, connections between curvature and the level sets of harmonic and, more generally, $p$--harmonic functions have been discovered. A significant early example of these connections is Colding’s work \cite{Colding_Acta}, where new monotonicity formulas along the level sets of harmonic functions are derived from gradient estimates of the Green’s function. 
A notable implication of this work is the uniqueness of tangent cones in Einstein manifolds, proven by Colding--Minicozzi in \cite{Col_Min_4}.
Subsequently, the same monotone quantities were used in~\cite{Ago_Fog_Maz_1} to obtain new Willmore--type geometrical inequalities and generalized in~\cite{BMF} to prove an optimal version of the Minkowski inequality in nonparabolic Riemannian manifolds with non--negative Ricci
curvature and Euclidean volume growth (see also ~\cite{Ago_Maz_CV, Ago_Fog_Maz_2} for the Euclidean
setting). One of these last monotone quantities is also relevant in a simplification of the proof of the second part of Hamilton's pinching conjecture \cite{Ben_Mant_Oro_Pl, Ben_Leo_Oro_Pl}.

In the same spirit, new monotonicity formulas (even for
non--harmonic functions) were then found in order to study static and sub--static
metrics in general relativity~\cite{Virginia1, Bor_Chr_Maz, Bor_Maz_1,Bor_Maz_2, AMO}.

In the past several years, the level sets of harmonic and $p$-harmonic functions have been used to analyze $3$--manifolds with non--negative scalar curvature. A significant step in this direction was done by Stern in~\cite{Stern}, where an integral inequality linking the scalar curvature of a closed $3$--manifold to harmonic maps into $\SSS^1$ was derived using the Bochner formula, the traced Gauss equation, and the Gauss--Bonnet theorem.
Since then, harmonic and $p$--harmonic functions have found a number of applications in the study of asymptotically flat manifolds with non--negative scalar curvature. Indeed, new proofs of the positive mass theorem were obtained in \cite{Ago_Maz_Oro_2} and \cite{bray3} by means of harmonic functions of different types.
%The stability of the positive pass theorem was proved in \cite{Do_So} always through the harmonic functions.
Simpler proofs of the Riemannian Penrose inequality (without rigidity) were found in \cite{AMMO, Hir_Miao_Tam} by exploiting $p$--harmonic functions. %(See, also \cite{}.) 
New results involving the connections ADM--mass/$p$--capacity and area/$p$--capacity were established in \cite{Miao2022, Or_1} via harmonic functions and in\cite{XiaYinZhou, MazYao_1} by means of $p$--harmonic functions.

Outside the asymptotically flat setting, gradient integral estimates along the level sets of the Green's function are a powerful tool to examine $3$--manifolds with non--negative scalar curvature.
These estimates lead to a sharp area comparison for the level sets of the Green's function. 
This comparison was first proved by Munteanu--Wang in \cite{MuntWang}, under the additional assumption of asymptotically non--negative Ricci curvature and later by Chan--Chu--Lee--Tsang \cite{ChChLeTs}, assuming that the Ricci curvature is bounded from below. Very recently, Colding--Minicozzi \cite{Col_Min_5} proved the gradient integral estimates necessary to obtain the sharp area comparison, without the assumption that the Ricci curvature is bounded from below. They used these gradient estimates to show a Cohn--Vessen type bound for the average of the scalar curvature. 
Integral gradient estimates are also a crucial element of the work~\cite{ChodLi} of Chodosh--Li, on the stable minimal $3$--manifolds and the Bernstein problem.

Due to the closeness of some results of this note to those in \cite{MuntWang, ChChLeTs, Col_Min_5}, we want to analyze some common points and differences between them.
All these papers assume the existence of the minimal positive Green's function with a pole vanishing at infinity. Now, when the $3$--manifold has Ricci curvature bounded from below, the Cheng--Yau gradient estimate \cite{Ch_Yau} implies the vanishing of the norm of its gradient at infinity. This property plays a key role in the work of Colding--Minicozzi. Therefore, they remove the assumption on Ricci curvature, but they need to suppose that the norm of the gradient of the Green's function vanishes at infinity. In all three papers it is essential to control the topology of the regular level sets of Green's function, in order to apply the Gauss--Bonnet theorem. This control is obtained by imposing certain conditions on the first integral homology group via the Betti number or by requiring the second integral homology group to be zero.

The aim of this note is to adapt the work of Colding--Minicozzi \cite{Col_Min_5} to $3$--manifolds with a compact, connected, minimal boundary and to present some applications. Among them, the most relevant is the sharp area--capacity inequality, originally derived by Bray--Miao in \cite{bray2}, thus extending its validity cases.
More precisely, we will prove the following theorem.

\begin{theorem}\label{mainTh}
Let $(M,g)$ be a $3$--dimensional, complete, non--compact, one--ended Riemannian manifold, with a minimal, compact and connected boundary. 
Assume that $H_2(M,\partial M;\Z)=0$ and the scalar curvature of $(M,g)$ is non--negative. We suppose that there exists the smooth solution $u$ of problem 
$$\Delta u=0\ \,\, \mathrm{in} \ {M}\quad\quad\quad u=0\ \,\,\mathrm{on} \  \partial M \quad\quad\quad u \to 1\,\, \mathrm{at} \  \infty\,, $$
and that $|\nabla u|$ vanishes at infinity.
Let $\mathcal{C}>0$ be the boundary capacity of $\partial M$ in $(M,g)$, expressed in terms of $u$ as
$$\mathcal{C}=\frac{1}{4\pi}\int\limits_{M}\vert \na u \vert^2 \,d\sigma=\frac{1}{4\pi}\int\limits_{\partial M}\vert \na u \vert\, d\sigma\,.$$
Then, we have the following conclusions:
\begin{itemize}
\item[$(a)$] (Boundary gradient integral estimate) There holds
$$\int_{\partial M} \vert \nabla u \vert^{2}\, d\sigma\leq \pi.$$
If the equality holds, then $(M,g)$ is isometric to the Schwarzschild manifold of mass $\mathcal{C}$, 
\begin{equation}\label{feq63bisbis}
\Big(\R^{3}\setminus B_{\frac{ \mathcal{C}}{2}}(0),\,\Big(1+\frac{\mathcal{C}}{\,2\vert  x\vert}\Big)^{4}g_{{\mathrm{eucl}}}\Big)\,.
\end{equation}
\item [$(b)$] For all $t\in[\mathcal{C}/2, + \infty)$,
$$\frac{t^2}{\mathcal{C}^2} \,\left(1+\frac{\mathcal{C}}{2t}\right)^{\!4}\!\!\!\! \int\limits_{\Big\{u=\frac{1-\frac{\mathcal{C}}{2t}}{1+\frac{\mathcal{C}}{2t}}\Big\}}\!\!\!\vert \nabla u \vert^{2}\, d\sigma\leq 4\pi\,.$$
If the equality holds for some $t_0\in[\mathcal{C}/2, + \infty)$, then $(M,g)$ is isometric to the Schwarzschild manifold of mass $\mathcal{C}$, see \eqref{feq63bisbis}.
\item [$(c)$] (Area comparison) For every $t\in [\mathcal{C}/2, + \infty)$,
\begin{equation}\label{areacompwithboundary}
\mathrm{Area}\left(\Biggl\{u=\frac{1-\frac{\mathcal{C}}{2t}}{1+\frac{\mathcal{C}}{2t}}\Biggr\}^{\!\!*}\right)\geq 4\pi\,t^{2}\,\bigg(\!1+\frac{\mathcal{C}}{2t}\bigg)^{\!4}\,,
\end{equation}
where we adopted the convention that, if $\Sigma$ is a level set of $u$, then $\Sigma^*$ denotes $\Sigma\setminus\{\vert\nabla u\vert=0\}$.
If the equality holds for some $t_0\in [\mathcal{C}/2, + \infty)$, then $(M,g)$ is isometric to the Schwarzschild manifold of mass $\mathcal{C}$, see \eqref{feq63bisbis}.
\item [$(d)$] (Area--capacity inequality) We have
\begin{equation}
\sqrt{\frac{\mathrm{Area}(\partial M)}{16\pi}}\geq\mathcal{C}\,.
\end{equation}
If the equality is true, then $(M,g)$ is isometric to a Schwarzschild manifold of $\mathcal{C}$, see \eqref{feq63bisbis}.
\item[$(e)$] (Volume comparison) For any $t\in [\mathcal{C}/2, + \infty)$,
\begin{equation}
\mathrm{Vol}\left(\Biggl\{u\leq\frac{1-\frac{\mathcal{C}}{2t}}{1+\frac{\mathcal{C}}{2t}}\Biggr\}\right)\geq
 \int\limits_{\mathcal{C}/2}^t 4\pi\,s^{2}\,\bigg(\!1+\frac{\mathcal{C}}{2s}\bigg)^{\!6}\,ds\,.
\end{equation}
If the equality holds for some $t_0\in (\mathcal{C}/2, + \infty)$, then $(M,g)$ is isometric to the Schwarzschild manifold of mass $\mathcal{C}$, see \eqref{feq63bisbis}.
\end{itemize}
\end{theorem}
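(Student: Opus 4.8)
The plan is to follow the Colding--Minicozzi strategy adapted to the capacitary potential $u$, with the minimal boundary replacing the pole. The starting point is a reparametrization: set $v = 1-u$, the $2$--capacitary potential, and change variables so that the level set $\{u = \tfrac{1 - \mathcal{C}/2t}{1 + \mathcal{C}/2t}\}$ corresponds to the level set of the model Schwarzschild radial coordinate at radius $t$. Introduce the function $\varphi(t)$ defined by the left-hand side of $(b)$, namely the rescaled gradient integral over the corresponding level set, and prove a monotonicity statement: $\varphi$ is non--increasing in $t$, with limit at most $4\pi$ as $t \to \infty$ (this limit computation uses $|\nabla u| \to 0$ at infinity together with the capacity normalization). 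The key analytic input is the differential inequality for $\varphi'(t)$, which comes from the first variation of the gradient integral along level sets; here one uses the coarea formula, the identity $\Delta u = 0$, and crucially the Bochner formula combined with the traced Gauss equation on the regular level sets to bring in the scalar curvature, exactly as in Stern's argument and its refinement in \cite{Col_Min_5}. Non--negativity of the scalar curvature and the Gauss--Bonnet theorem applied to the regular level sets (whose topology is controlled by the hypothesis $H_2(M, \partial M; \Z) = 0$, forcing each regular level set to be a disjoint union of spheres, actually a single sphere by connectedness of the boundary) yield $\varphi' \le 0$ with the correct Schwarzschild-calibrated constants. This gives $(b)$; statement $(a)$ is the limit of $(b)$ as $t \to \mathcal{C}/2$, i.e.\ evaluating on $\partial M$, where the factor $\big(1 + \tfrac{\mathcal{C}}{2t}\big)^4 t^2 / \mathcal{C}^2$ equals $16$ and $4\pi / 16 = \pi/4$; wait, one must check the constant carefully, but in any case $(a)$ is the boundary endpoint of the monotone quantity.

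For $(c)$, the area comparison, I would combine $(b)$ with the Cauchy--Schwarz inequality on the level set: since $\int_{\Sigma_t^*} |\nabla u|\, \big(\tfrac{1}{|\nabla u|}\big)\, d\sigma = \mathrm{Area}(\Sigma_t^*)$ formally, but more usefully one uses that the flux $\int_{\Sigma_t} |\nabla u|\, d\sigma = 4\pi\mathcal{C}$ is constant (harmonicity plus divergence theorem), so by Cauchy--Schwarz
\begin{equation}
(4\pi\mathcal{C})^2 = \left(\int_{\Sigma_t^*} |\nabla u|\, d\sigma\right)^{\!2} \le \mathrm{Area}(\Sigma_t^*) \int_{\Sigma_t^*} |\nabla u|^2\, d\sigma,
\end{equation}
and then substituting the bound from $(b)$ rearranges to exactly \eqref{areacompwithboundary}. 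Then $(d)$ is simply $(c)$ evaluated at $t = \mathcal{C}/2$, where the level set is $\partial M$ itself (since $u = 0$ there) and the right-hand side becomes $4\pi (\mathcal{C}/2)^2 (1+1)^4 = 16\pi\mathcal{C}^2$, giving $\mathrm{Area}(\partial M) \ge 16\pi \mathcal{C}^2$, i.e.\ the stated inequality; one must also argue $\{|\nabla u| = 0\} \cap \partial M$ is negligible or empty, which follows from the minimality of $\partial M$ and the Hopf boundary point lemma.

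For $(e)$, the volume comparison, I would use the coarea formula to write $\mathrm{Vol}(\{u \le c\}) = \int \big(\int_{\{u=\tau\}} |\nabla u|^{-1}\, d\sigma\big)\, d\tau$, change variables from the level parameter $\tau$ to the Schwarzschild radius $s$, and bound $\int_{\{u=\tau\}} |\nabla u|^{-1}\, d\sigma$ from below using Cauchy--Schwarz the other way: $\mathrm{Area}(\Sigma^*)^2 \le \big(\int |\nabla u|^{-1}\big)\big(\int |\nabla u|\big) = \big(\int |\nabla u|^{-1}\big)\, 4\pi\mathcal{C}$, and then insert the area lower bound from $(c)$. Carefully tracking the change of variables $\tau \leftrightarrow s$, whose Jacobian is dictated by matching with the Schwarzschild profile, should produce precisely the integrand $4\pi s^2 (1 + \tfrac{\mathcal{C}}{2s})^6$. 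The main obstacle I anticipate is \emph{not} the chain of inequalities but the rigidity analysis: tracing equality back through Bochner, Gauss--Bonnet, and the two Cauchy--Schwarz steps forces $|\nabla u|$ to be constant on level sets, the level sets to be totally umbilic with the Schwarzschild second fundamental form, and the ambient scalar curvature to vanish; assembling these into a global isometry with \eqref{feq63bisbis} requires showing the metric splits in the expected warped-product form and integrating the resulting ODE, and one must handle the possibility that equality in $(b)$ or $(e)$ at a single interior $t_0$ propagates to all $t$ via the monotonicity (which it does, since a non--increasing function attaining its limiting supremum at one point is constant). I would also need to confirm that the critical set $\{|\nabla u| = 0\}$ does not obstruct the Gauss--Bonnet computation, using the standard fact that it has locally finite $\mathcal{H}^1$ measure in each regular level set and measure zero in $M$ by the structure theory of harmonic functions in dimension $3$.
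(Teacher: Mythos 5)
Your reductions of $(c)$, $(d)$ and $(e)$ to $(b)$ --- Cauchy--Schwarz against the constant flux $\int_{\Sigma_t}|\nabla u|\,d\sigma=4\pi\mathcal{C}$, evaluation at $t=\mathcal{C}/2$, and coarea plus the area lower bound --- coincide with the paper's arguments and are fine (minor points: the prefactor at $t=\mathcal{C}/2$ is $4$, not $16$, so $(a)$ is literally $(b)$ at the endpoint; and what $H_2(M,\partial M;\Z)=0$ gives, and what Gauss--Bonnet needs, is connectedness of the regular level sets, not sphericity). The genuine gap is in your proof of $(b)$ itself. You propose a monotonicity statement for the rescaled quantity together with a ``limit computation'' at infinity, and neither half works as stated. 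First, the direction is wrong: a non--increasing $\varphi$ whose limit at infinity is at most $4\pi$ only yields the lower bound $\varphi(t)\geq\lim\varphi$, not $\varphi\leq 4\pi$. The quantity that is actually monotone (non--decreasing) is $G$, and its monotonicity requires $F\geq 0$, hence $F(\mathcal{C}/2)=A=2\mathcal{C}\big(\pi-\int_{\partial M}|\nabla u|^2d\sigma\big)\geq 0$, which is precisely statement $(a)$; so obtaining $(a)$ as ``the boundary endpoint of the monotone quantity'' is circular in your scheme --- in the paper $(a)$ must be established first, by contradiction.

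Second, and more fundamentally, the hypothesis $|\nabla u|\to 0$ carries no rate: combined with the constant flux it only gives $\int_{\Sigma_t}|\nabla u|^2\,d\sigma\to 0$, which says nothing about $A_1(t)\sim t^2\int_{\Sigma_t}|\nabla u|^2\,d\sigma$, since the weight grows quadratically. This is exactly the obstruction that the Colding--Minicozzi mechanism, reproduced here as Proposition \ref{PropCol_Min}, is designed to overcome: if $A_1$ ever exceeds the relevant threshold ($4\pi$ when $A\geq 0$, $4\pi-A/t_0$ when $A<0$), the ODE analysis of $a(t)=tA_1'(t)/A_1(t)$ --- using the identity of Step 2, the estimate $(tA_1')^2\leq\tfrac{2}{3}A_1B_1$, Gauss--Bonnet and $F'\geq B_1/2$ --- forces quadratic growth $A_1(t)\geq Ct^2$, i.e.\ $\int_{\Sigma_t}|\nabla u|^2\,d\sigma\geq c>0$, contradicting the decay of $|\nabla u|$; the case $A<0$ is then excluded by integrating $G'\geq\tfrac{\mathcal{C}^2}{4t^3}A$ together with $G(\mathcal{C}/2)=-A$ and $G\to 0$. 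Your proposal contains none of this dichotomy, yet it is the heart of the theorem: without it (or a Ricci lower bound supplying an actual gradient decay rate, as in the earlier works cited in the introduction) the ``limit at infinity'' step cannot be carried out, and the Bochner/Gauss--Bonnet differential inequality alone does not close the argument. The rigidity discussion is acceptable as a sketch, but in the paper it is handled by propagating $G\equiv 0$ (hence $F\equiv 0$ on $\mathcal{T}$) and invoking the known characterization of Schwarzschild, rather than by re-deriving the warped-product splitting from scratch.
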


For completeness, we will present (as a consequence of the work \cite{Col_Min_5}) a sharp area and volume comparison, in the case that $(M,g)$ has no boundary, replacing the function $u$ with one related to the minimal positive Green's function. This volume comparison leads to a more geometric proof of the positive mass inequality than the one given in \cite{Ago_Maz_Oro_2}. The positive mass inequality is one part of the positive mass theorem, originally proven by Schoen--Yau \cite{SchYau79,SchYau81} and Witten \cite{Witten} (see also \cite{ParTau}).

\section{Non--compact $3$--manifolds with non--negative scalar curvature}

Let $(M,g)$ be a $3$--dimensional, complete, one--ended Riemannian manifold, whose boundary is either empty or minimal, compact and connected. Assume that $H_2(M,\partial M;\Z)=0$ and the scalar curvature of $(M,g)$ is non--negative.  
In this section, some gradient estimates, area and volume comparisons will be introduced.
To do this, we distinguish the cases when $M$ is without a boundary and when $M$ has a minimal, compact and connected boundary. 

\subsection{Boundaryless case}\label{Without boundary case}
In this case, we suppose that there exists the minimal positive Green's function $\mathcal{G}_{o}$ (for $\Delta$ operator) with pole $o\in M$, and that $\mathcal{G}_{o}, |\nabla \mathcal{G}_{o}|$ vanish at infinity. 
We consider the function $u=1-4\pi\, \mathcal{G}_{o}$ and we define the function
\begin{equation}\label{defhatF}
\widehat{F}(t)=\,-\, 4\pi \,t^{-1}\,+\,t\!\!\!\int\limits_{\{u=1-t^{-1}\}} \!\!\!\!\vert  \nabla u\vert^{2}\,d\sigma
\end{equation}
for every $t\in (0,+\infty)$. 
Here, $\sigma$ denotes the $2$--dimensional Hausdorff measure of $(M,g)$.
 Munteanu--Wang proved in \cite{MuntWang} that the function $\widehat{F}$ is non--decreasing and recently Colding--Minicozzi showed in \cite{Col_Min_5} that it is non--positive.
This last property implies the following area comparison
\begin{equation}\label{areacompwithoutboundary}
\mathrm{Area}\Big(\{u=1-t^{-1}\}^{*}\Big)=\mathrm{Area}\Big(\{u=1-t^{-1}\}\setminus \{\vert\nabla u\vert=0\}\Big)\geq 4\pi\,t^2\,,
\end{equation}
which in turn gives 
\begin{equation}\label{volumecompwithoutboundary}
\mathrm{Vol}\Big(\{u\leq 1-t^{-1}\}\Big)\geq\frac{4}{3}\pi\,t^3\,.
\end{equation}
Indeed, since the equality 
\begin{equation}\label{idcap}
\int\limits_{\{u=1-t^{-1}\}} \!\!\!\!\vert  \nabla u\vert\,d\sigma=4\pi
\end{equation}
holds, we obtain
\begin{align*}
4\pi=\int\limits_{\{u=1-t^{-1}\}} \!\!\!\!\vert  \nabla u\vert\,d\sigma&\leq \Bigg(\,\,\int\limits_{\{u=1-t^{-1}\}} \!\!\!\!\vert  \nabla u\vert^{2}\,d\sigma\Bigg)^{\frac{1}{2}}\,\mathrm{Area}\Big(\{u=1-t^{-1}\}^*\Big)^{\frac{1}{2}}\\
&\leq \sqrt{4\pi}\,t^{-1}\,\mathrm{Area}\Big(\{u=1-t^{-1}\}^*\Big)^{\frac{1}{2}}\,,
\end{align*}
from which inequality \eqref{areacompwithoutboundary} follows. Moreover, if the equality in \eqref{areacompwithoutboundary} holds for some value $t_0\in (0,+\infty)$, then $(M,g)$ is isometric to the Euclidean space of dimension~ $3$. This is a consequence of the fact that if there exists $t_0\in (0,+\infty)$ such that $\widehat{F}(t_0)=0$, then $(M,g)$ is isometric to the $3$--dimensional Euclidean space, see \cite{Col_Min_5}, for instance.
Concerning the volume comparison \eqref{volumecompwithoutboundary}, we start observing that 
\begin{align}
\mathrm{Vol}\Big(\{u\leq 1-t^{-1}\}\Big)%&= \mathrm{Vol}\Big(\{u\leq 1-t^{-1}\}\setminus \{\vert \nabla u\vert=0\}\Big)\\
%&
= \int\limits_{0}^t\Bigg(\,\int\limits_{\{u\leq 1-s^{-1}\}} \!\!\!\!\frac{1}{\vert  \nabla u\vert}\,d\sigma\Bigg) s^{-2}\, ds \label{ffeq100}
\end{align}
by the coarea formula \cite[Proposition 2.1]{BMF}. Here, we are keeping into account that the set $\left\lbrace\abs{\nabla u}=0\right\rbrace$ is $\sigma$--negligible, see~\cite[Theorem~1.1]{Hardt2}. Now, since
\begin{align*}
4\pi s^2&\leq \mathrm{Area}\Big(\{u\leq 1-s^{-1}\}^*\Big)\\
&\leq \Bigg(\,\int\limits_{\{u\leq 1-s^{-1}\}} \!\!\!\!\frac{1}{\vert  \nabla u\vert}\,d\sigma\Bigg)^{\frac{2}{3}} \, \Bigg(\,\int\limits_{\{u\leq 1-s^{-1}\}} \!\!\!\!\vert  \nabla u\vert^2\,d\sigma\Bigg)^{\frac{1}{3}}\\
&\leq (4\pi s^{-2})^{\frac{1}{3}}\Bigg(\,\int\limits_{\{u\leq 1-s^{-1}\}} \!\!\!\!\frac{1}{\vert  \nabla u\vert}\,d\sigma\Bigg)^{\frac{2}{3}}\,,
\end{align*}
we have
$$\int\limits_{\{u\leq 1-s^{-1}\}} \!\!\!\!\frac{1}{\vert  \nabla u\vert}\,d\sigma\geq 4\pi \,s^{4}\,.$$
 Consequently, we obtain
\begin{align}
\mathrm{Vol}\Big(\{u\leq 1-t^{-1}\}\Big)\geq \int\limits_{0}^t 4\pi\, s^2\,ds=\frac{4}{3}\pi\, t^3\,.
\end{align}
Furthermore, if the equality holds for some value $t_0\in (0,+\infty)$, then $(M,g)$ is isometric to the $3$--dimensional Euclidean space.
\medskip

A relevant consequence of this volume comparison estimate is an alternative proof of the (Riemannian) positive mass inequality in dimension $3$. %(such proof is very similar to the one given in \cite{Ago_Maz_Oro_2}, the only difference is that it is more geometric).
We also remark that some link between a volume comparison estimate
and the positive mass inequality is already known, see for instance \cite{FanShiTam, Jar}.
%Indeed, a volume comparison of compact regions enclosed by coordinate spheres can be obtained from the positive mass inequality. 
%In our case, we use the previous volume comparison to obtain the positive mass inequality.}

In order to state precisely this theorem, let us first recall the definition of one--ended asymptotically flat manifold and of ADM mass.
%(after the names of R.~Arnowitt, S.~Deser and C.~W.~Misner, who introduced it in~\cite{AdM0}).

\begin{definition}\label{defAFman}
A $3$--dimensional Riemannian manifold $(M,g)$ (with or without boundary) is said to be {\em one--ended asymptotically flat} if there exists a closed and bounded subset $K$ and a diffeomorphism $\Phi: M\setminus K\to\R^{3}\setminus\overline{B}_{r}(0)$ such that in the (coordinate) chart $(M\setminus K, \Phi=(x^{i}))$, called {\em asymptotically flat (coordinate) chart}, setting $g=g_{ij}\,dx^{i}\otimes dx^{j}$, there holds
\begin{equation}\label{eq4}
g_{ij}=\delta_{ij} + O_{2}(\vert x \vert ^{-\tau})\,,
\end{equation}
for some constant $\tau>1/2$ (the {\em order of decay} of $g$ in the asymptotically flat chart $(M\setminus K, (x^{i}))$, briefly {\em the order}), where $\delta$ is the Kronecker delta function.
\end{definition}

\begin{definition}\label{defADMmass}
Let $(M,g)$ be a one--ended asymptotically flat manifold having integrable or non--negative scalar curvature and let $\big(E,(x^{1},x^{2},x^{3})\big)$ be an asymptotically flat chart.
The following limit
\begin{equation}\label{formADMmass}
m_{\mathrm{ADM}}=\lim_{r\to +\infty}\frac{1}{16\pi}\!\!\!\int\limits_{\{\vert  x\vert\,=\,r\}}\!\!\!(\partial_{j}g_{ij}-\partial_{i}g_{jj})\frac{x^{i}}{\vert  x \vert}\,d\sigma_{\mathrm{eucl}}\,,
\end{equation}
exists (possibly equal to $+\infty$) and it is independent of the asymptotically flat chart (proved first by Bartnik~\cite{Bartnik} and then independently by Chru\'sciel~\cite{Chrusciel}).
This geometric invariant is called the {\em ADM mass} of $(M,g)$.
\end{definition}

\begin{theorem}[Riemannian positive mass theorem]\label{PMT}
\label{PMT}
Let $(N,h)$ be a $3$--dimensional, complete, one--ended asymptotically flat manifold with non--negative scalar curvature. Then, the ADM mass of $(N,h)$ is non--negative, that is
\begin{equation*}
m_{ADM}(N,h) \geq 0.
\end{equation*}
Moreover, $m_{ADM}(N,h)=0$ if and only if $(N,h)$ is isometric to $(\R^{3},g_{\R^3})$.
\end{theorem}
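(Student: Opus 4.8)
The plan is to deduce the positive mass inequality from the volume comparison \eqref{volumecompwithoutboundary} of Subsection~\ref{Without boundary case}, by recognising $m_{\mathrm{ADM}}$ as the second--order coefficient in the large--scale expansion of the volumes of the super--level sets of the Green's function of $(N,h)$; the inequality $m_{\mathrm{ADM}}\ge 0$ would then appear as a limit of that comparison, and the rigidity would come from its equality case.

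First I would set up the reductions. Arguing by contradiction, assume $m_{\mathrm{ADM}}(N,h)<0$, in particular finite. Invoking the Schoen--Yau density theorem, I may further assume that $(N,h)$ has harmonic asymptotics, i.e.\ $h=\phi^{4}\,g_{\mathrm{eucl}}$ outside a compact set with $\phi=1+\tfrac{m}{2\vert x\vert}+O_{2}(\vert x\vert^{-2})$ and $m=m_{\mathrm{ADM}}<0$, still keeping the scalar curvature non--negative; and, after a standard surgery killing the second homology without affecting the end nor the sign of the scalar curvature, that $H_{2}(N;\Z)=0$. Classical elliptic estimates on asymptotically flat manifolds then furnish the minimal positive Green's function $\mathcal{G}_{o}$ with $\mathcal{G}_{o}=O(\vert x\vert^{-1})$ and $\vert\nabla\mathcal{G}_{o}\vert=O(\vert x\vert^{-2})$, so that $u=1-4\pi\,\mathcal{G}_{o}$ falls under the hypotheses of Subsection~\ref{Without boundary case} and \eqref{volumecompwithoutboundary} applies to it.

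Next I would compute the expansion. Since in dimension three the conformal covariance of the Laplacian makes $\phi\,\mathcal{G}_{o}$ Euclidean harmonic up to a negligible error, and since it vanishes at infinity, I get $\phi\,\mathcal{G}_{o}=\tfrac{1}{4\pi\vert x\vert}+O(\vert x\vert^{-2})$ with leading constant fixed by the normalization of $\mathcal{G}_{o}$, hence
\begin{equation*}
\mathcal{G}_{o}(x)=\frac{1}{4\pi\vert x\vert}\Big(1-\frac{m}{2\vert x\vert}+O(\vert x\vert^{-2})\Big),
\end{equation*}
so that the level set $\{\mathcal{G}_{o}=\tfrac{1}{4\pi t}\}$ is an $O(t^{-1})$--normal graph over the coordinate sphere $\{\vert x\vert=t-\tfrac{m}{2}\}$. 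Combining this with $\sqrt{\det h}=\phi^{6}=1+\tfrac{3m}{\vert x\vert}+O(\vert x\vert^{-2})$ and integrating in polar coordinates would give, as $t\to+\infty$,
\begin{equation*}
\mathrm{Vol}\big(\{u\le 1-t^{-1}\}\big)=\mathrm{Vol}\big(\{\mathcal{G}_{o}\ge\tfrac{1}{4\pi t}\}\big)=\tfrac{4}{3}\pi\,t^{3}+4\pi\,m_{\mathrm{ADM}}\,t^{2}+O(t),
\end{equation*}
an identification of the ADM mass that could alternatively be read off from the volume--to--mass formulas of Fan--Shi--Tam. Feeding this into \eqref{volumecompwithoutboundary} yields $\tfrac{4}{3}\pi t^{3}+4\pi m_{\mathrm{ADM}}t^{2}+O(t)\ge\tfrac{4}{3}\pi t^{3}$; dividing by $t^{2}$ and letting $t\to+\infty$ forces $m_{\mathrm{ADM}}\ge 0$, the desired contradiction, proving the inequality for general $(N,h)$. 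For the equality case, I would first note that $m_{\mathrm{ADM}}(N,h)=0$ forces $(N,h)$ to be scalar--flat --- otherwise a compactly supported conformal deformation localized where $R>0$ produces an asymptotically flat metric with $R\ge 0$ and strictly negative mass --- and then conclude that $(N,h)$ is isometric to $(\R^{3},g_{\R^{3}})$ from the rigidity in the equality case of \eqref{areacompwithoutboundary} by standard arguments, on which I would not dwell.

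The main obstacle I anticipate is the expansion step: pinning down the $t^{2}$--coefficient as \emph{exactly} $4\pi\,m_{\mathrm{ADM}}$. This requires the passage to harmonic asymptotics (so that both $\mathcal{G}_{o}$ and $\sqrt{\det h}$ admit expansions sharp enough near infinity), a careful estimate showing that the gap between the Green's level sets and the coordinate spheres only affects the volume at order $O(t)$, and meticulous tracking of the constants linking $\phi$, the ADM integral \eqref{formADMmass} and the volume element. The topological surgery reducing to $H_{2}(N;\Z)=0$, the density reduction and the rigidity argument are classical, but I would still have to invoke and apply them with care.
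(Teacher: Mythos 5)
Your argument for the inequality is essentially the paper's: reduce to simplified topology and asymptotics, expand the Green's function, identify $4\pi m_{ADM}$ as the $t^{2}$--coefficient in the expansion of $\mathrm{Vol}\big(\{u\le 1-t^{-1}\}\big)$, and feed this into the volume comparison \eqref{volumecompwithoutboundary}. The differences are only in implementation: the paper performs the reduction in one stroke via \cite[Section 2]{bray3} (manifold diffeomorphic to $\R^{3}$ with exactly Schwarzschild asymptotics), and then sidesteps any analysis of the level sets themselves by writing the coarea integrand as $\frac{(1-u)^{4}}{|\nabla u|^{2}}\,|\nabla u|$ — a ratio in which the first--spherical--harmonic term of the expansion cancels — and using the flux identity \eqref{idcap}; your polar--coordinate computation over level sets viewed as graphs is workable, but note that the dipole term of $\phi\,\mathcal{G}_{o}$ makes the deviation from the coordinate sphere of size $O(1)$, not $O(t^{-1})$; it is harmless for the $t^{2}$--coefficient only because it integrates to zero over spheres, and this is exactly the kind of bookkeeping the paper's ratio trick avoids.

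The genuine gap is in your rigidity step. After correctly reducing to the scalar--flat case by a conformal deformation, you propose to conclude flatness ``from the rigidity in the equality case of \eqref{areacompwithoutboundary}.'' That rigidity statement requires equality to hold at some \emph{finite} $t_{0}$, and $m_{ADM}=0$ gives no such thing: it only yields $\mathrm{Vol}\big(\{u\le 1-t^{-1}\}\big)=\tfrac{4}{3}\pi t^{3}+o(t^{2})$, i.e.\ that the monotone quantity $\widehat{F}$ of \eqref{defhatF} tends to $0$ as $t\to+\infty$; since $\widehat{F}$ is non--decreasing and non--positive, this is perfectly compatible with $\widehat{F}(t)<0$, hence strict inequality in \eqref{areacompwithoutboundary}, for every finite $t$. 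As stated, the step fails. The paper (and the standard literature it cites, \cite{SchYau79} and \cite{DanLee}) instead deduces rigidity from the validity of the inequality alone: after the scalar--flat reduction, a compactly supported deformation of the metric in the direction of the Ricci tensor, corrected conformally to restore $\RRR\ge 0$, would strictly decrease the mass unless $\Ric\equiv 0$, and in dimension three Ricci--flat means flat. Replace your final step with this perturbation argument (or simply cite it, as the paper does); the rest of your proposal then matches the paper's proof.
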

\begin{proof}
The first part is the same as that given in \cite{Ago_Maz_Oro_2}. 
Let us focus on the inequality $m_{ADM}(N,h) \geq 0$.
We use~\cite[Section 2]{bray3} to reduce the analysis to the case where the underlying manifold $N$ is diffeomorphic to $\R^3$ and there exists a distinguished asymptotically flat coordinate chart $x=(x^1,x^2,x^3)$ in which the metric $g$ can be expressed as 
\begin{equation}
\label{eq:schwas}
g \, = \, \Big(1+\frac{m_{ADM}}{2\vert x\vert}\Big)^{\!4}\,\delta_{ij} \,\, dx^{i}\!\otimes dx^{j}.
\end{equation}
 In a such chart, the function $u$ has the following asymptotic expansion:
\begin{align}
u& \, = \, 1-\frac1{\vert x\vert}+\frac1{2\vert x\vert^{2}}\,
\big(m_{ADM}+\phi (x/\vert x\vert)\big)+O_2\big(\vert x\vert^{-3+	\alpha} \big) \, ,\label{expansionofu}
\end{align}
where $\phi$ satisfies $\Delta_{\SSS^{2}}\phi=-2\phi$ and $0<\alpha<1/2$ is a fixed real number that can be chosen as small as desired. Moreover, we note that a manifold of this kind clearly satisfies all the assumptions made at the beginning of this section. Therefore, we rewrite the equality \eqref{ffeq100} in the following way,
\begin{equation*}
\mathrm{Vol}\Big(\big\{u\leq 1-\,t^{-1}\big\}\Big)=\int\limits_{0}^t\Bigg(\,\int\limits_{\{u=1-\,s^{-1}\}} \!\!\!\!\frac{(1-u)^4}{\vert  \nabla u\vert^2}\,\vert  \nabla u\vert \,d\sigma\Bigg) s^{2}\, ds.
\end{equation*}
Now, we observe that
\begin{equation}
\frac{(1-u)^4}{\vert  \nabla u\vert^2}=\,1+ 2m_{ADM}\,\vert x\vert^{-1}+O\big(\vert x\vert^{-2+\alpha}\big)\,, 
\end{equation}
as
\begin{equation}
\vert \nabla u\vert = \, \vert x\vert^{-2}\big[\,1-\big(2m_{ADM}+\phi (x/\vert x\vert)\big)\vert x\vert^{-1}+O\big(\vert x\vert^{-2+\alpha}\big)\,\big]
\end{equation}
due to the asymptotic expansion \eqref{expansionofu} and the expression of metric \eqref{eq:schwas}. Then, using identity \eqref{idcap}, we obtain
\begin{equation}
\mathrm{Vol}\Big(\big\{u\leq 1-\,t^{-1}\big\}\Big)=\frac{4}{3}\pi\, t^3+4\pi m_{ADM}\,t^2+o(t^2)\,,
\end{equation}
which implies
\begin{equation}
0\leq t^{-2}\,\Big[\mathrm{Vol}\Big(\big\{u\leq 1-\,t^{-1}\big\}\Big)-\frac{4}{3}\pi\, t^3\Big]=4\pi m_{ADM}+o_t(1)\,,
\end{equation}
by the volume comparison \eqref{volumecompwithoutboundary}. The above inequality clearly leads to the wanted inequality, $m_{ADM}\geq 0$.

The rigidity statement, i.e. the fact that $(N,h)$ and $(\R^{3},g_{\R^3})$ are isometric if $m_{ADM}=0$, can be deduced from the validity of the inequality $m_{ADM}\geq 0$, through the nowadays standard argument proposed in the original Schoen--Yau's paper \cite{SchYau79} (see also~\cite[pp. 95-97 and p. 102]{DanLee}).
\end{proof}

Such proof uses the same tools of the one given in \cite{Ago_Maz_Oro_2}. Furthermore, if on one hand it is more geometric, on the other hand, it is not clear if it works without simplifying the metric at infinity (this is actually true for the proof in \cite{Ago_Maz_Oro_2}). Most likely, such an assumption cannot be removed: one reason is that Jauregui \cite{Jar} showed that for a generic asymptotically flat chart $(x^1,x^2,x^3)$, there holds
\begin{align}
V (r)&= \text{$g$--volume of the compact region enclosed by $\{|x|=r\}$}\\
&=\frac{4}{3}\pi r^3 +2\pi m_{\mathrm{ADM}} r^2+2\pi r\beta(r)+o(r^2)
\end{align}
where $\beta(r)=2m_{\mathrm{ADM}} r+O(1)$ in a harmonically flat manifold, while in general the $r\beta(r)$ term could be problematic.
Moreover, in our case the sub--level sets of the function $u$ should replace the compact regions enclosed by the coordinate spheres.

\subsection{Minimal, compact and connected boundary case}
In this case, we suppose that there exists the smooth solution $u$ of the problem
\begin{equation}\label{cappot}
\Delta u=0\ \,\, \mathrm{in} \ {M}\quad\quad\quad u=0\ \,\,\mathrm{on} \  \partial M \quad\quad\quad u \to 1\,\, \mathrm{at} \  \infty\,.
\end{equation}
Then the boundary capacity $\mathcal{C}$ of $\partial M$ in $(M,g)$, defined by
$$\mathcal{C}\,=\,\inf\Bigg\{\frac{1}{4\pi}\int\limits_{M} \vert \nabla \psi \vert^2\, d\sigma\,:\, \psi\in C^{\infty}_c(M)\quad\text{and}\quad \psi=1\quad \text{on}\ \partial M\Bigg\}\,,$$
can be expressed in terms of the function $u$ as 
\begin{equation}\label{boundcapintermofu}
\mathcal{C}=\frac{1}{4\pi}\int\limits_{M}\vert \na u \vert^2 \,d\mu=\frac{1}{4\pi}\int\limits_{\partial M}\vert \na u \vert\, d\sigma\,.
\end{equation}
%Let us consider the function $G:[\mathcal{C}/2, + \infty)\to \R$, given by
Let $G$ be the function defined on the interval $[\mathcal{C}/2,+\infty)$ given by formula
\begin{equation}\label{defG}
G(t)= -\,\frac{\pi\mathcal{C}^{2}}{t}+\frac{t}{4} \,\left(1+\frac{\mathcal{C}}{2t}\right)^{\!4} \int\limits_{\Sigma_{t}}\vert \nabla u \vert^{2}\, d\sigma\,,
\end{equation}
where $\Sigma_{t}$ denotes the following level set of $u$, 
\begin{equation}\label{Sigmat}
\Sigma_{t}=\Big\{u=\Big(1-\frac{\mathcal{C}}{2t}\Big)/\Big(1+\frac{\mathcal{C}}{2t}\Big)\Big\}\,.
\end{equation}
The function $G$ is locally Lipschitz, since the function
$$t\in [\mathcal{C}/2,+\infty)\mapsto \int\limits_{\Sigma_{t}}\vert \nabla u \vert^{2}\, d\sigma\in [0,+\infty)$$
 is locally Lipschitz by \cite[Lemma 11 and Lemma 12]{ChodLi}.
 By \cite[Proposition 4.1]{Or_1}, we have
 \begin{equation}\label{G'}
G'(t)=\frac{\pi\mathcal{C}^{2}}{t^{2}} +\frac{1}{4}\,\left(1+\frac{\mathcal{C}}{2t}\right)^{\!3}\left(1-\frac{3\mathcal{C}}{2t}\right)\int\limits_{\Sigma_{t}}\vert \nabla u \vert^{2}\, d\sigma-\frac{\mathcal{C}}{4t}\left(1+\frac{\mathcal{C}}{2t}\right)^{\!2}\!\int\limits_{\Sigma_{t}}\vert \nabla u \vert\,\mathrm{H}\, d\sigma\,
\end{equation}
for every $t\in\mathcal{T}$, where $\mathcal{T}$ is the open set of $ [\mathcal{C}/2,+\infty)$ given by
\begin{equation}\label{defTcap}
\mathcal{T}=\Big\{t\in[\mathcal{C}/2,+\infty)\,:\, \Big(1-\frac{\mathcal{C}}{2t}\Big)/\Big(1+\frac{\mathcal{C}}{2t}\Big) \text{ is a regular value of $u$}\Big\}\,
\end{equation}
(we recall that $\partial M= \{u = 0\}$, by the maximum principle, and that zero is a regular value of $u$, by the Hopf lemma).
Moreover, following an analogous argument to that one of \cite[Theorem 1.1]{Ago_Maz_Oro_2} and using in it the proof of \cite[Proposition 3.1]{Or_1}, the function $F:[\mathcal{C}/2,+\infty)\to\R$, defined as 
\begin{equation}\label{feq40}
F(t)=4\pi t\,+ \,\frac{t^{3}}{\mathcal{C}^{2}} \,\left(1+\frac{\mathcal{C}}{2t}\right)^{\!3} \!\left(1-\frac{3\mathcal{C}}{2t}\right)\int\limits_{\Sigma_{t}} \vert \nabla u \vert^{2}\, d\sigma\,-\,\frac{t^{2}}{\mathcal{C}} \,\left(1+\frac{\mathcal{C}}{2t}\right)^{\!2}\!\int\limits_{\Sigma_{t}}\vert \nabla u \vert\,\mathrm{H}\, d\sigma
\end{equation}
and satisfying the equality
\begin{equation}\label{eqf50}
F(t)=\,\frac{4t^{3}}{\mathcal{C}^{2}}\, G'(t)
\end{equation}
for every $t\in \mathcal{T}$, belongs to $W^{1,1}_{loc}(\mathcal{C}/2,+\infty)$. Above, $\mathrm{H}$ is the mean curvature of $\Sigma_{t}^*$ with respect to the $\infty$--pointing unit normal vector field $\nu={\nabla u}/{\vert \nabla u \,\vert}$, expressed in terms of $u$ as $-g(\nabla |\nabla u|, \nabla u)/|\nabla u|^2$.
It then follows that $G\in W^{2,1}_{loc}(\mathcal{C}/2,+\infty)$. Thus, it is of class $C^1([\mathcal{C}/2, + \infty))$ and its first derivative is locally absolutely continuous on the interval $[\mathcal{C}/2, + \infty)$.
\smallskip

Following an analogous argument to the one of \cite[Proposition 0.4]{Col_Min_5}, we will prove that the function $G$ is both non--decreasing and non--positive, under the additional assumption that $|\nabla u|$ vanishes at infinity.
To obtain this, let us set
\begin{equation}
A=F(\mathcal{C}/2)=2\mathcal{C}\Big(\pi-\int_{\partial M} \vert \nabla u \vert^{2}\, d\sigma\Big)\,,
\end{equation}
and we consider the functions 
\begin{align}
A_1(t)&=\frac{t^2}{\mathcal{C}^2} \,\left(1+\frac{\mathcal{C}}{2t}\right)^{\!4} \int\limits_{\Sigma_{t}}\vert \nabla u \vert^{2}\, d\sigma\,\label{defA_1}\\
\widetilde{A}_1(t)&=A_1(t)+\frac{1}{2t}\,A\label{tildadefA_1}
\end{align}
for every $t\in[\mathcal{C}/2, + \infty)$.
Being
$$A_1(t)=4\pi+\frac{4t}{\mathcal{C}^2}\, G(t)$$
for each $t\in [\mathcal{C}/2, + \infty)$, it turns out that $A_1$ is of class $C^1([\mathcal{C}/2, + \infty))$ and its first derivative is locally absolutely continuous in the interval $[\mathcal{C}/2, + \infty)$.
Obviously, the same results hold for the function $\widetilde{A}_1$.
We will show that $A_1(t)\leq 4\pi$, if $A\geq 0$, and $A_1(t)\leq 4\pi-A/t$, otherwise.
These inequalities will be a consequence the following quadratic lower bound for the growth of $A_1$, which holds without the additional assumption: $|\nabla u|\to 0$ at infinity.

\begin{proposition}\label{PropCol_Min}
Let $(M,g)$ be a $3$--dimensional, complete, one--ended Riemannian manifold, whose boundary is minimal, compact and connected. 
Assume that the scalar curvature of $(M,g)$ is non--negative. We suppose that there exists the smooth solution $u$ to problem \eqref{cappot}, and that all regular level sets of $u$ are connected.\\
If $A\geq 0$ and $A_1(t_0)>4\pi$ for some $t_0\in [\mathcal{C}/2, + \infty)$, then there exists a positive constant $C$ such that, for every $t\geq t_0$,
$$A_1(t)\geq C\,t^2\,.$$
If $A< 0$ and $A_1(t_0)>4\pi-A/t_0$ for some $t_0\in [\mathcal{C}/2, + \infty)$, then there exists a positive constant $C$ such that, for all $t\geq t_0$,
$$A_1(t)\geq C\,t^2\,.$$
\end{proposition}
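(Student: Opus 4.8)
The plan is to adapt the argument of \cite[Proposition 0.4]{Col_Min_5} to the present setting with boundary. Throughout we shall use the auxiliary functions $G$, $F$, $A_1$, $\widetilde A_1$ and the algebraic identities
$$
A_1(t)=4\pi+\frac{4t}{\mathcal{C}^2}\,G(t),\qquad
F(t)=\frac{4t^3}{\mathcal{C}^2}\,G'(t),\qquad
\widetilde A_1(t)=A_1(t)+\frac{A}{2t},\qquad A=F(\mathcal{C}/2),
$$
together with the regularity recalled above: $G\in C^1$ with $G'$ locally absolutely continuous, so that $F\in W^{1,1}_{loc}(\mathcal{C}/2,+\infty)$ and $A_1,\widetilde A_1$ are of class $C^1$ with locally absolutely continuous first derivative. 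Since all these functions are absolutely continuous on compact subintervals, it will be enough to establish the relevant differential inequalities almost everywhere and integrate them.

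The first step is a second--order differential inequality. Let $\mathcal{T}$ be the set of \eqref{defTcap}; it has full measure in $[\mathcal{C}/2,+\infty)$ by Sard's theorem, since the critical values of $u$ form a Lebesgue--null subset of $[0,1)$ and $t\mapsto\bigl(1-\tfrac{\mathcal C}{2t}\bigr)/\bigl(1+\tfrac{\mathcal C}{2t}\bigr)$ is a diffeomorphism onto $[0,1)$, and for $t\in\mathcal{T}$ the surface $\Sigma_t$ is smooth, compact, disjoint from $\{\vert\na u\vert=0\}$, and --- by hypothesis --- connected. On $\mathcal{T}$ we differentiate the expression \eqref{G'} for $G'$ once more, computing $\tfrac{d}{dt}\int_{\Sigma_t}\vert\na u\vert^2\,d\sigma$ and $\tfrac{d}{dt}\int_{\Sigma_t}\vert\na u\vert\,\mathrm{H}\,d\sigma$ by the coarea formula (see \cite[Proposition 2.1]{BMF} and \cite{Hardt2}); this expresses $F'$ (equivalently $G''$) through integrals over $\Sigma_t$ of $\vert\na u\vert^2\mathrm{H}$, $\vert\na u\vert\,\mathrm{H}^2$, $\vert\na u\vert\,\vert\ringg{\mathrm{II}}\vert^2$ and of the intrinsic Gauss curvature $K$ of $\Sigma_t$, where $\ringg{\mathrm{II}}$ denotes the trace--free second fundamental form of $\Sigma_t$. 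Inserting the Bochner identity $\tfrac12\Delta\vert\na u\vert^2=\vert\na^2 u\vert^2+\Ric(\na u,\na u)$ for the harmonic function $u$, the traced Gauss equation $2K=\RRR-2\Ric(\nu,\nu)+\mathrm{H}^2-\vert\mathrm{II}\vert^2$ on $\Sigma_t$, and the assumption $\RRR\ge0$, all curvature contributions are either absorbed with a favourable sign or reduce to $\int_{\Sigma_t}K\,d\sigma$, which is controlled by the Gauss--Bonnet theorem, $\int_{\Sigma_t}K\,d\sigma=2\pi\,\chi(\Sigma_t)\le 4\pi$, precisely because $\Sigma_t$ is a connected closed surface (the only use of the connectedness hypothesis). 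The outcome should be a closed differential inequality for $F$, an equality in the Schwarzschild model, which --- in view of the algebraic identities above --- is equivalent to
$$
t\,\widetilde A_1'(t)\ \ge\ 2\bigl(\widetilde A_1(t)-4\pi\bigr)\qquad\text{for a.e.\ }t\ge\mathcal{C}/2,
$$
that is, to the fact that $t\mapsto\bigl(\widetilde A_1(t)-4\pi\bigr)/t^{2}$ (equivalently $t\mapsto G(t)/t+A\,\mathcal{C}^2/(8\,t^3)$) is non--decreasing on $[\mathcal{C}/2,+\infty)$. This monotonicity is strictly stronger than that of $G$ alone, and is exactly the point where the method of \cite{Col_Min_5} enters.

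Granting this, the conclusion follows by an ODE comparison. In both cases the hypothesis forces $\widetilde A_1(t_0)>4\pi$: if $A\ge0$ then $\widetilde A_1(t_0)\ge A_1(t_0)>4\pi$, and if $A<0$ then $\widetilde A_1(t_0)=A_1(t_0)+\tfrac{A}{2t_0}>4\pi-\tfrac{A}{t_0}+\tfrac{A}{2t_0}=4\pi-\tfrac{A}{2t_0}>4\pi$. Hence, by the monotonicity just stated, for every $t\ge t_0$
$$
\widetilde A_1(t)-4\pi\ \ge\ \frac{\widetilde A_1(t_0)-4\pi}{t_0^{2}}\,t^{2}\ =:\ 2\,C_0\,t^{2},\qquad C_0>0,
$$
and therefore $A_1(t)=\widetilde A_1(t)-\tfrac{A}{2t}\ge 2C_0\,t^{2}-\tfrac{\vert A\vert}{2t_0}$ for all $t\ge t_0$; since $A_1$ is continuous and positive on $[\mathcal{C}/2,+\infty)$, a routine adjustment of the constant yields $A_1(t)\ge C\,t^{2}$ for all $t\ge t_0$, with a suitable $C\in(0,C_0]$, which is the assertion.

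The hard part will be the first step: the second--order computation of $F'$ out of \eqref{G'}, and, above all, the bookkeeping in the use of the Bochner identity, the traced Gauss equation and Gauss--Bonnet --- keeping track of the critical set $\{\vert\na u\vert=0\}$ and of the behaviour of $\Sigma_t$ and of the various remainder terms near it, justifying the coarea formula and the differentiation under the integral sign at regular values, and verifying that all boundary and lower--order terms carry the correct sign or cancel. These are exactly the delicate points resolved in \cite[Proposition 0.4]{Col_Min_5} in the boundaryless case; here they must in addition accommodate the minimal boundary $\partial M$, which enters through the normalization at $t=\mathcal{C}/2$ and the constant $A=F(\mathcal{C}/2)=2\mathcal{C}\bigl(\pi-\int_{\partial M}\vert\na u\vert^2\,d\sigma\bigr)$, and one must in particular check that the comparison above closes in the case $A<0$, i.e.\ when $F(\mathcal{C}/2)<0$.
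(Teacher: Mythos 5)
There is a genuine gap, and it sits exactly at the step you yourself flag as the ``hard part''. Your whole argument rests on the claimed a.e.\ differential inequality $t\,\widetilde A_1'(t)\ge 2\bigl(\widetilde A_1(t)-4\pi\bigr)$, i.e.\ the monotonicity of $t\mapsto(\widetilde A_1(t)-4\pi)/t^2$. This is not what the Bochner identity, the traced Gauss equation, Gauss--Bonnet and $\RRR\ge0$ deliver. What that computation gives (this is the paper's Step 1) is the \emph{coefficient--one} inequality
\begin{equation}
t\,\widetilde A_1'(t)\;\ge\;\widetilde A_1(t)-4\pi+\frac{1}{2t}\int\limits_{\mathcal{C}/2}^{t}\bigl(R_1(s)+B_1(s)\bigr)\,ds\,,
\end{equation}
obtained by writing $tA_1'(t)-A_1(t)+4\pi=F(t)/t$, $F(t)=A+\int_{\mathcal{C}/2}^tF'(s)\,ds$, and bounding $F'$ from below via \eqref{feq54} and Gauss--Bonnet on the connected level sets. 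By itself this only yields \emph{linear} growth, $\widetilde A_1(t)\ge 4\pi+\delta\,t/t_0$, not the quadratic growth in the statement; and your stronger coefficient--two inequality cannot be extracted from it pointwise (were it available, the whole analysis of \cite{Col_Min_5} would collapse to one integration, and it is not even clear a posteriori, since for $A\ge0$ the correction $A\mathcal C^2/(8t^3)$ works against you). So the asserted ``equivalence'' between the local curvature bookkeeping and the monotonicity of $(\widetilde A_1-4\pi)/t^2$ is the missing idea, not a detail.

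The mechanism that actually upgrades linear to quadratic growth is absent from your proposal. In the paper (following \cite{Col_Min_5}) one uses, in addition to Step 1: the identity $tA_1'(t)=\frac{t}{\mathcal C}\bigl(1+\frac{\mathcal C}{2t}\bigr)^{2}\int_{\Sigma_t}B(\nu,\nu)\,d\sigma$ for the trace--free tensor $B$; Cauchy--Schwarz together with the algebraic bound $B(\nu,\nu)^2\le\frac23|B|^2$ (\cite[Lemma 2.1]{Col_Min_5}) to get $\bigl(tA_1'\bigr)^2\le\frac23A_1B_1$ and hence $F'(t)\ge\frac34\,(tA_1'(t))^2/A_1(t)$; and then a Riccati--type analysis of $a(t)=tA_1'(t)/A_1(t)$, namely $a'\ge\frac1t\bigl(1-\frac{4\pi}{A_1}-\frac14a^2\bigr)$, which — using the linear lower bound on $A_1$ from Step 1 — yields only the asymptotic rate $a(t)\ge 2-C/\sqrt t$, and finally $A_1(t)\ge c\,t^2$ after integrating $(\log A_1)'\ge 2t^{-1}-Ct^{-3/2}$. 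Your ODE comparison in the second half of the proposal is fine as pure calculus, but it is fed by an input you have not established and that the quoted tools do not provide; as written, the proof does not close. (The reduction of the case $A<0$ to $\widetilde A_1(t_0)>4\pi$, and the final adjustment of constants, do match the paper and are correct.)
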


\begin{proof}
We introduce the $(0,2)$--symmetric tensors of $M$,
\begin{equation}
T=\nabla du+\frac{6u}{1-u^2} \,du\otimes du\quad \quad \text{and}\quad\quad B=T-\frac{\mathrm{tr}(T)}{3}\,g\,,
\end{equation}
and we define
\begin{equation}
S_1(t)=\int\limits_{\Sigma_t} {\rm R}\,d\sigma\quad\quad \text{and}\quad\quad B_1(t)=\int\limits_{\Sigma_t} \frac{|B|^2}{|\nabla u|^2}\,d\sigma\,.
\end{equation}
on $[\mathcal{C}/2, + \infty)$ and $\mathcal{T}$, respectively. Let us divide the proof into steps.

\textbf{Step 1:} {\em We have as follows.
\begin{itemize}
\item [$(1)$] If $A\geq 0$,  
$$tA_1'(t)\geq A_1(t)-4\pi+\frac{1}{2t}\int\limits_{\mathcal{C}/2}^t \big(R_1(s)+B_1(s)\big)\,ds$$
holds everywhere.
\item [$(2)$] For all $t\in[\mathcal{C}/2, + \infty)$,
$$t\widetilde{A}_1'(t)\geq \widetilde{A}_1(t)-4\pi+\frac{1}{2t}\int\limits_{\mathcal{C}/2}^t \big(R_1(s)+B_1(s)\big)\,ds\,.$$
\end{itemize}
}
The starting point is observing that, for every $t\in\mathcal{T}$,
\begin{equation}\label{fffeq1}
A_1'(t)=\frac{2t}{\mathcal{C}^2}\,\left(1+\frac{\mathcal{C}}{2t}\right)^{\!3}\left(1-\frac{\mathcal{C}}{2t}\right)\int\limits_{\Sigma_{t}}\vert \nabla u \vert^{2}\, d\sigma\,-\,\frac{1}{\mathcal{C}} \,\left(1+\frac{\mathcal{C}}{2t}\right)^{\!2}\!\int\limits_{\Sigma_{t}}\vert \nabla u \vert\,\mathrm{H}\, d\sigma\,,
\end{equation}
being
\begin{equation}\label{cap4geq18}
\frac{d}{dt}\int\limits_{\Sigma_{t}} \vert \nabla u \vert^{2}\, d\sigma=\,-\,\frac{\mathcal{C}}{t^{2}}\left(1+\frac{\mathcal{C}}{2t}\right)^{\!-2}\int\limits_{\Sigma_{t}} \vert \nabla u \vert\,\mathrm{H}\, d\sigma
\end{equation}
 by the normal first variation of the volume measure.
Now, equality \eqref{fffeq1} implies 
\begin{equation}\label{fffeq3}
tA_1'(t)-A_1(t)=\frac{1}{t}\,\Big(F(t)-4\pi\,t\Big)=\frac{1}{t}\Big(A+\int\limits_{\mathcal{C}/2}^t F'(s)\,ds\Big)\,-\,4\pi\,,
\end{equation}
for all $t\in \mathcal{T}$. Here, in the second equality we used the fact that $F$ admits a locally absolutely continuous representative in $[\mathcal{C}/2, + \infty)$ coinciding with it in the set $\mathcal{T}$ % since $F\in W^{1,1}_{loc}(\mathcal{C}/2,+\infty)$, 
and that zero is a regular level set of $u$.
Then, we recall that 
\begin{equation}\label{feq54}
F'(t)=4\pi- \int\limits_{\Sigma_{t}}\frac{\,\rm{R}^{\Sigma_{t}}}{2} \, d\sigma +\int\limits_{\Sigma_{t}}\bigg[\frac{\vert \nabla^{\Sigma_{t}}\vert \nabla u\vert\vert^{2}}{\vert \nabla u \vert^{2}}+\frac{\RRR}{2}+\frac{\,\vert \ringg{\mathrm{h}}\vert^{2}}{2}+ \frac{3}{4}\left(\frac{4u}{1-u^{2}}\,\,\vert \nabla u \vert-\mathrm{H}\right)^{\!2}\bigg]d\sigma
\end{equation}
for every $t\in \mathcal{T}$, since
\begin{align}
\frac{d}{dt}\int\limits_{\Sigma_{t}} \vert \nabla u \vert \,\mathrm{H}\,d\sigma&=-\frac{\mathcal{C}}{t^{2}}\left(1+\frac{\mathcal{C}}{2t}\right)^{\!-2} \int\limits_{\Sigma_{t}} \vert \nabla u \vert \left[ \, \Delta^{\Sigma_{t}} \!\left(\frac{1}{\vert \nabla u \vert}\right) + \, \frac{\vert \mathrm{h}\vert^{2}+\Ric (\nu,\nu)}{\vert \nabla u\vert} \, \right]\,d\sigma\,\\
&=-\frac{\mathcal{C}}{t^{2}}\left(1+\frac{\mathcal{C}}{2t}\right)^{\!-2}\int\limits_{\Sigma_{t,\mathcal{C}}} \bigg[ \frac{\vert \nabla^{\Sigma_t}\vert \nabla u\vert\vert^{2}}{\vert \nabla u \vert^{2}}+\frac{\RRR}{2}-\frac{\RRR^{\Sigma_{t}}}{2}+\frac{\,\vert \ringg{\mathrm{h}}\vert^{2}}{2}+\frac{3\,\mathrm{H}^{2}}{4}\bigg]\,d\sigma\,,
\end{align}
where $\na^{\Sigma_{t}}$, $\Delta^{\Sigma_{t}}$ are the Levi--Civita connection and the Laplace--Beltrami operator of the induced metric $g^{\Sigma_{t}}$, respectively, $\rm{R}^{\Sigma_{t}}$ is the scalar curvature of ${\Sigma_{t}}$ and finally, $\mathrm{h}$, $\ringg{\mathrm{h}}$ denote the second fundamental form of ${\Sigma_{t}}$ and its traceless part with respect to $\nu=\na u /\vert\na u\vert$. In the above, the first equality follows from the first normal variation of the volume measure and of the mean curvature, whereas the last one is obtained with the help of the traced Gauss equation and the divergence theorem.\\
By Sard's theorem, the closed set $[\mathcal{C}/2, + \infty)\setminus\mathcal{T}$ has zero Lebesgue measure. Thus, putting together equations \eqref{fffeq3} and \eqref{feq54}, by Gauss--Bonnet theorem and continuity, we get 
\begin{align}
tA_1'(t)-A_1(t)+4\pi&=\frac{1}{t}\Big(A+\int\limits_{\mathcal{C}/2}^t F'(s)\,ds\Big)\\
&\geq \frac{A}{t}+\frac{1}{t}\int\limits_{\mathcal{C}/2}^t \int\limits_{\Sigma_{s}}\bigg[\frac{\vert \nabla^{\Sigma_{s}}\vert \nabla u\vert\vert^{2}}{\vert \nabla u \vert^{2}}+\frac{\RRR}{2}+\frac{\,\vert \ringg{\mathrm{h}}\vert^{2}}{2}+ \frac{3}{4}\left(\frac{4u}{1-u^{2}}\,\,\vert \nabla u \vert-\mathrm{H}\right)^{\!2}\bigg]d\sigma\,ds\,, \label{fffeq4}
\end{align}
for every $t\in [\mathcal{C}/2, + \infty)$.
At the same time, on each regular level set $\Sigma$ of $u$, the following identities
\begin{align}
\vert  \nabla du \vert^{2}&=\vert \nabla u \vert^{2} \vert  \mathrm{h} \vert^{2}+\vert\nabla\vert \nabla u \vert\vert^{2}+\vert\nabla^{\Sigma}\vert \nabla u \vert\vert^{2}\\
\vert \mathrm{H}\vert^{2}&=\frac{\vert\nabla^{\perp}\vert \nabla u \vert\vert^{2}}{\vert \nabla u \vert^{2}}
\end{align}
imply
\begin{align}
\frac{\vert \nabla^{\Sigma}\vert \nabla u\vert\vert^{2}}{\vert \nabla u \vert^{2}}&+\frac{\,\vert \ringg{\mathrm{h}}\vert^{2}}{2}+ \frac{3}{4}\left(\frac{4u}{1-u^{2}}\,\,\vert \nabla u \vert-\mathrm{H}\right)^{\!2}\\
&=\frac{\vert \nabla^{\Sigma}\vert \nabla u\vert\vert^{2}}{\vert \nabla u \vert^{2}}+\frac{\,\vert \mathrm{h}\vert^{2}}{2}+ \frac{ \mathrm{H}^2}{2}+\frac{12 u^{2}}{(1-u^{2})^{2}}\,\vert \nabla u \vert^{2}
+\frac{6 u }{ 1-u^{2}} \, \frac{g(\nabla \vert \nabla u\vert,\nabla u )}{\vert \nabla u\vert}\\
&=\frac{\vert \nabla du\vert^2}{2\vert \nabla u \vert^{2}}+\frac{12 u^{2}}{(1-u^{2})^{2}}\,\vert \nabla u \vert^{2}
+\frac{6 u }{ 1-u^{2}} \, \frac{g(\nabla \vert \nabla u\vert,\nabla u )}{\vert \nabla u\vert}=\frac{\vert B\vert^2}{2\vert \nabla u \vert^{2}}\,. \label{fffeq5}
\end{align}
Thus, by inequality \eqref{fffeq4} and equality \eqref{fffeq5}, we get
\begin{equation}
tA_1'(t)-A_1(t)+4\pi\geq \frac{A}{t}+\frac{1}{2t}\int\limits_{\mathcal{C}/2}^t \big(R_1(s)+B_1(s)\big)\,ds\,,
\end{equation}
leading to
\begin{equation}
t\widetilde{A}_1'(t)-\widetilde{A}_1(t)+4\pi\geq \frac{1}{2t}\int\limits_{\mathcal{C}/2}^t \big(R_1(s)+B_1(s)\big)\,ds\,.
\end{equation}
The last two inequalities then imply the desired ones.\\

\textbf{Step 2}: {\em For every $t\in \mathcal{T}$, we have
\begin{equation}
tA_1'(t)=\frac{t}{\mathcal{C}}\,\left(1+\frac{\mathcal{C}}{2t}\right)^{\!2} \int\limits_{\Sigma_{t}} B(\nu,\nu)\,d\sigma\,,
\end{equation}
where $\nu=\na u /\vert\na u\vert$.\\
} 
This equality follows trivially from formula \eqref{fffeq1}, as $\mathrm{H}=-\nabla du (\nabla u, \nabla u)/\vert \nabla u \vert^3$.\\

\textbf{Step 3}: {\em There exists a positive constant $C$ such that $A_1(t)\geq C\,t^2\,$ for all $t\geq t_0$.}\\
We need to distinguish the cases: $A\geq 0$ and $A_1(t_0)>4\pi$ for some $t_0\in [\mathcal{C}/2, + \infty)$; $A<0$ and $A_1(t_0)>4\pi-A/t_0$ for some $t_0\in [\mathcal{C}/2, + \infty)$, which imply $\widetilde{A_1}(t_0)>4\pi$.
However, in both cases, we will show that there exists $\delta>0$ such that, for every $t\geq t_0$,
\begin{equation}\label{fffeq6}
A'_1(t)\geq 0 \quad\quad \text{and}\quad\quad A_1(t)\geq 4\pi+\delta\,\frac{t}{t_0}\,.
\end{equation}
Let $\delta>0$ be such that $A_1(t_0)>4\pi+\delta$ if $A\geq 0$, otherwise $\widetilde{A_1}(t_0)>4\pi+\delta$. Then, taking advantage of the hypothesis $\mathrm{R}\geq 0$, Step 1 gives the following.
\begin{itemize}
\item If $A\geq 0$, then $A'_1(t)\geq 0 $ and $A_1(t)\geq 4\pi+\delta\,(t/t_0)$ for every $t\geq t_0$;
\item If $A<0$, then $A_1(t)>4\pi-A/t_0\geq 4\pi-A/t$ for every $t\geq t_0$, which gives $A'_1(t)\geq 0$ in the interval $[t_0,+\infty)$. Moreover, since it turns out that $\widetilde{A}_1(t)\geq 4\pi+\delta\,(t/t_0)$ for all $t\geq t_0$, and recalling that $\widetilde{A}_1(t)=A_1(t)+ A/(2t)$ with $A<0$, it then follows that $A_1(t)\geq 4\pi+\delta\,(t/t_0)$ for every $t\geq t_0$.
\end{itemize}
As in \cite{Col_Min_5}, let us consider on the interval $[\mathcal{C}/2,+\infty)$ the function
\begin{equation}\label{fffeq12}
a(t)=t\,\frac{A'_1(t)}{A_1(t)}
\end{equation}
that measures the rate of the polynomial growth of $A_1$.
Since the function $A_1$ is of class $C^1([\mathcal{C}/2, + \infty))$ and its first derivative is locally absolutely continuous on the interval $[\mathcal{C}/2, + \infty)$, from inequality \eqref{fffeq6} it follows that the function $a(t)$ is locally absolutely continuous on $[\mathcal{C}/2, + \infty)$. Notice that $a(t)\geq 0$ for all $t\geq t_0$, by \eqref{fffeq6}.
 Now, for every $t\in\mathcal{T}$, observing that the first equality of \eqref{fffeq3}, that is $tA_1'(t)-A_1(t)=F(t)/t-4\pi$, leads to 
\begin{align}
A_1'(t)&=\frac{F(t)}{t^2}+\frac{1}{t}\,(A_1(t)-4\pi)\\
tA_1''(t)&=[tA_1'(t)-A_1(t)]'=-\frac{F(t)}{t^2}\, +\frac{F'(t)}{t}\,,
\end{align}
it follows
\begin{equation}\label{fffeq8}
a'(t)=\frac{1}{t}\,\Big(1-\frac{4\pi}{A_1(t)}+\frac{F'(t)}{A_1(t)}-a^2(t)\Big)\,.
\end{equation}
Now, let us observe, similarly to Colding--Minicozzi in \cite{Col_Min_5}, that
\begin{equation}\label{fffeq7}
\frac{F'(t)}{A_1(t)}\geq \frac{3}{4}\,a^{2}(t)\,.
\end{equation}
Indeed, Step 2 and the Cauchy--Schwarz inequality give
\begin{equation}
\big(t\,A'_1(t)\big)^2\leq A_1(t)\int\limits_{\Sigma_{t}} \Big(\frac{B(\nu,\nu)}{\vert \nabla u\vert}\Big)^{2}\,d\sigma\,,
\end{equation}
which along with \cite[Lemma 2.1]{Col_Min_5} implies
\begin{equation}
\big(t\,A'_1(t)\big)^2\leq \frac{2}{3}\,A_1(t)\, B_1(t)\,.
\end{equation}
Putting together the Gauss--Bonnet theorem, the assumption $\mathrm{R}\geq 0$ and the identity \eqref{fffeq5} in equality \eqref{feq54}, it follows $F'(t)\geq B_1(t)/2$, which leads to inequality \eqref{fffeq7}.
By using such inequality in equality \eqref{fffeq8}, there holds 
\begin{equation}\label{fffeq9}
a'(t)\geq \frac{1}{t}\,\Big(1-\frac{4\pi}{A_1(t)}-\frac{1}{4}\,a^2(t)\Big)\,,
\end{equation}
for every $t\in \mathcal{T}$.\\
We are now able to show the existence of a positive constant $C=C(t_0,\delta)$ such that
\begin{equation}
a(t)\geq 2-\frac{C}{\sqrt{t}}
\end{equation}
for all $t\geq t_0$. 
If $a(t)\geq 2$ for all $t\geq t_0$, the claim is trivially true. Therefore, we suppose that there exists $\widetilde{t}_0\geq t_0$ such that $a(\,\widetilde{t}_0)<2$. 
Without loss of generality, we can assume that $\widetilde{t}_0> t_0$ by the continuity of $a(t)$, and that $\widetilde{t}_0\in\mathcal{T}$ by Sard's theorem.
Then, let $t_2>t_0$ be a generic element of $\mathcal{T}$ such that $a(t_2)<2$.
Let us denote by $t_1$ the infimum of the set of $t\in[t_0, t_2]$ such that the function $a(t)$ is strictly less than $2$ on the interval $(t,t_2]$.
Either $t_1=t_0$ or $t_1>t_0$.
In the first case, $0\leq a(t)\leq 2$ for every $t\in [t_0,t_2]$, therefore from inequality \eqref{fffeq9}, it follows that
\begin{equation}
a'(t)\geq \frac{1}{t}\,\Big(1-\frac{4\pi}{A_1(t)}-\frac{1}{2}\,a(t)\Big)\,,
\end{equation}
which implies
\begin{equation}\label{fffeq10}
\big(\sqrt{t}\,a(t)\big)'\geq \frac{1}{\sqrt{t}}\,\Big(1-\frac{4\pi}{A_1(t)}\Big)\,,
\end{equation}
for every $t\in \mathcal{T}\cap [t_0,t_2]$.
The function $\sqrt{t}\,a(t)$ is locally absolute continuous in $[\mathcal{C}/2, + \infty)$, thus, by integrating this inequality from $t_0$ to $t_2$, we obtain
\begin{equation}
a(t_2)\geq 2+\sqrt{\frac{t_0}{t_2}} \,a(t_0)-2\,\sqrt{\frac{t_0}{t_2}}-\frac{4\pi}{\sqrt{t_2}}\int\limits_{t_0}^{t_2} \frac{ds}{\sqrt{s}\,A_1(s)}\,.
\end{equation}
Then, it follows from the second inequality \eqref{fffeq6} that
\begin{equation}
a(t_2)\geq 2+\sqrt{\frac{t_0}{t_2}} \,a(t_0)-2\,\sqrt{\frac{t_0}{t_2}}-\frac{8\pi}{\delta}\,\sqrt{\frac{t_0}{t_2}}\geq  2-2\Big(1+\frac{4\pi}{\delta}\Big)\,\sqrt{\frac{t_0}{t_2}}\,.
\end{equation}
In the other case, we have $a(t_1)=2$, in addition to having $0\leq a(t)\leq 2$ for every $t\in [t_1,t_2]$. Thus, proceeding as above, we have
\begin{align}
a(t_2)&\geq 2+\sqrt{\frac{t_1}{t_2}} \,a(t_1)-2\,\sqrt{\frac{t_1}{t_2}}-\frac{4\pi}{\sqrt{t_2}}\int\limits_{t_1}^{t_2} \frac{ds}{\sqrt{s}\,A_1(s)}=2-\frac{4\pi}{\sqrt{t_2}}\int\limits_{t_1}^{t_2} \frac{ds}{\sqrt{s}\,A_1(s)}\\
&\geq2-\frac{4\pi}{\sqrt{t_2}}\int\limits_{t_0}^{t_2} \frac{ds}{\sqrt{s}\,A_1(s)}\geq 2-\frac{8\pi}{\delta}\,\sqrt{\frac{t_0}{t_2}} \,.
\end{align}
Consequently, choosing $C=C(\delta,t_0)=2\big(1+\frac{4\pi}{\delta}\big)\sqrt{t_0}$, we get
\begin{equation}\label{fffeq11}
a(t)\geq 2-\frac{C}{\sqrt{t}}\,,
\end{equation}
for every $t\in \mathcal{T}\cap [t_0,+\infty)$. The continuity of the function $a(t)$ implies that inequality \eqref{fffeq11} holds everywhere in the interval $[t_0,+\infty)$.
This last inequality along with the definition of the function $a(t)$ given by formula \eqref{fffeq12}, implies that
$$\big(\log (A_1(t))\big)'\geq 2\,t^{-1}-C\,t^{-\frac{3}{2}}$$
for all $t\in [t_0,+\infty)$. Thus, integrating this inequality from $t_0$ to $t$, we conclude as in \cite[Proposition 0.4]{Col_Min_5} that
$$A_1(t)\geq t_0^{-2}\,A_1(t_0)\,e^{-\frac{2C}{\sqrt{t_0}} }\, t^2\,.$$
\end{proof}

We are now able to prove the main result of this subsection, Theorem \ref{mainTh}. First we recall that, under the assumption $H_2(M,\partial M;\Z)=0$, all regular level sets of the function $u$ are connected. This is a consequence of the maximum principle and the fact that $H_2(M,\partial M;\Z)=0$ implies that every regular level set of $u$ either it is the boundary of a compact and connected domain contained in $M\setminus\partial M$, or together with $\partial M$, it is the boundary of a compact and connected domain.

\begin{proof}[Proof of Theorem \ref{mainTh}] $\phantom{hh}$

\textcolor{red}{$(a)$} We suppose by contradiction that $A<0$.
Then, either $A_1(t)\leq 4\pi-A/t$ for all $t\in [\mathcal{C}/2, + \infty)$ or there exists some $t_0\in [\mathcal{C}/2, + \infty)$ such that $A_1(t_0)>4\pi-A/t_0$. 
We observe that this last case is not possible.
Indeed, if this last case holds, Proposition \ref{PropCol_Min} implies the existence of a positive constant $C$ such that $A_1(t)\geq C\,t^2\,$ for all $t\geq t_0$. Therefore, from the definition \eqref{defA_1} of the function $A(t)$ it follows that
\begin{equation}
 \int\limits_{\Sigma_{t}}\vert \nabla u \vert^{2}\, d\sigma\geq C \left(1+\frac{\mathcal{C}}{2t}\right)^{\!-4}\!\!\mathcal{C}^2\geq 2^{-4}C\,\mathcal{C}^2\,,
\end{equation}
but this is no possible as $\vert \nabla u\vert \to 0$ at infinity and $\int_{\Sigma_{t}}\vert \nabla u \vert \, d\sigma=4\pi\mathcal{C}$ for all $t\in [\mathcal{C}/2, + \infty)$
(this last equality is a consequence of the divergence theorem, Sard's theorem and the continuity of the function $t\mapsto \int_{\Sigma_{t}}\vert \nabla u \vert \, d\sigma$, which follows from \cite[Lemma 11 and Lemma 12]{ChodLi}). In conclusion, $A_1(t)\leq 4\pi-A/t$ for all $t\in [\mathcal{C}/2, + \infty)$. An immediate result is that the function $G(t)$, defined by \eqref{defG}, converges to zero as $t\to +\infty$. This gives $A\geq0$, as shown at the end of the proof of \cite[Proposition 4.1]{Or_1}. 
Indeed, for every $t\in[\mathcal{C}/2, + \infty)$,  the properties of the function $F(t)$ and equality \eqref{eqf50} imply
\begin{equation*}
G'(t)\geq\, \frac{\mathcal{C}^{2}}{4t^{3}}\,A\,.
\end{equation*}
Then, integrating this inequality between $\mathcal{C}/2$ and $t$, it follows
\begin{equation*}
G(t)+ A\geq \Big(-\frac{\mathcal{C}^{2}}{8t^{2}}\,+\,\frac{1}{2}\Big) A\,,
\end{equation*}
as $G(\mathcal{C}/2)=-A$.
Thus, passing to the limit for $t\to +\infty$, we get $A\geq0$, which is a contradiction.

\smallskip

To ensure clarity, let us collect some facts and consequences of this last result.
\begin{itemize}
\item[$(1)$] Arguing as above, from Proposition \ref{PropCol_Min} it follows that $A_1(t)\leq 4\pi$ for all $t\in [\mathcal{C}/2, + \infty)$. This fact implies the convergence of $G(t)$ to zero as $t\to+\infty$.
\item[$(2)$] The function $F(t)$ is continuously differentiable in the set $\mathcal{T}$ and its first derivative is given by formula \eqref{feq54}. 
Furthermore, the function $F(t)$ admits a locally absolutely continuous representative in $[\mathcal{C}/2, + \infty)$, which coincides with it in the set $\mathcal{T}$, an open subset of $[\mathcal{C}/2, +\infty)$ containing $\mathcal{C}/2$. %(indeed, $\partial M= \{u = 0\}$, by the maximum principle, and therefore, zero is a regular value of $u$, by the Hopf lemma).
Then, by Sard's theorem, the function $F(t)$ is non--decreasing on the set $\mathcal{T}$. This, along with the result $F(\mathcal{C}/2)=A\geq 0$, implies that $F$ is non--negative in $\mathcal{T}$.
\item[$(3)$] The function $G(t)$ is non--decreasing, by the last statement in $(2)$ and equality \eqref{eqf50}. Moreover, $G(\mathcal{C}/2)=-A$ and $\lim_{t\to+\infty} G(t)=0$.
\end{itemize}
Let us continue the proof of point $(a)$.
 If $A=0$, then the function $G$ is identically zero by point $(3)$. Therefore, the rest of claim $(a)$ follows from \cite[Proposition 4.2]{Or_1}.

\textcolor{red}{$(b)$} The first part of the statement is contained in point $(1)$. Now, we assume that there exists some $t_0\in[\mathcal{C}/2, + \infty)$ such that $A_1(t_0)=4\pi$. Then, $G(t_0)=0$ and it follows from point $(3)$ that the function $G$ is identically zero on the interval $[t_0,+\infty)$. A trivial consequence of this fact is that $G'(t)=0$ for all $t\in [t_0,+\infty)$. This equality, along with formula \eqref{eqf50}, implies that the function $F(t)$, defined by \eqref{feq40}, is identically zero on the set $\mathcal{T}$, by using point $(2)$. Therefore, the rest of claim $(b)$ follows from \cite[Proposition 3.2]{Or_1}.

\textcolor{red}{$(c)$} The first part of the statement follows from point $(3)$, arguing analogously as in \cite[Theorem 4.3]{Or_1}. Let us assume that there exists some $t_0\in[\mathcal{C}/2, + \infty)$ such that the equality holds in inequality \eqref{areacompwithboundary}. Since the steps are the same of those contained in Subsection \ref{Without boundary case}, for such a value, we have $G(t_0)=0$. Then, the rest of claim $(c)$ follows from the proof of the equality case in point $(b)$.

\textcolor{red}{$(d)$} It follows from point $(c)$ with $t=\mathcal{C}/2$.

\textcolor{red}{$(e)$} Proceeding as done in Subsection \ref{Without boundary case}, we have
\begin{align*}
4\pi s^{2}\,\bigg(\!1+\frac{\mathcal{C}}{2s}\bigg)^{\!4}&\leq \mathrm{Area}\left(\Biggl\{u=\frac{1-\frac{\mathcal{C}}{2s}}{1+\frac{\mathcal{C}}{2s}}\Biggr\}^{\!\!*}\right)\\
&\leq \Bigg(\,\int\limits_{\Sigma_s} \frac{1}{\vert  \nabla u\vert}\,d\sigma\Bigg)^{\frac{2}{3}} \, \Bigg(\,\int\limits_{\Sigma_s} \vert  \nabla u\vert^2\,d\sigma\Bigg)^{\frac{1}{3}}\\
&\leq \bigg[4\pi \mathcal{C}^2s^{-2}\,\bigg(\!1+\frac{\mathcal{C}}{2s}\bigg)^{\!-4}\,\bigg]^{\frac{1}{3}}\Bigg(\,\int\limits_{\Sigma_s} \frac{1}{\vert  \nabla u\vert}\,d\sigma\Bigg)^{\frac{2}{3}}\,,
\end{align*}
which implies
\begin{equation}\label{fffeq13}
\frac{\mathcal{C}}{s^2}\,\bigg(\!1+\frac{\mathcal{C}}{2s}\bigg)^{\!-2}\int\limits_{\Sigma_s} \frac{1}{\vert  \nabla u\vert}\,d\sigma\geq 4\pi s^{2}\bigg(\!1+\frac{\mathcal{C}}{2s}\bigg)^{\!6}\,.
\end{equation}
Combining the coarea formula \cite[Proposition 2.1]{BMF} with the fact that $\left\lbrace\abs{\nabla u}=0\right\rbrace$ is $\sigma$--negligible, by inequality \eqref{fffeq13} we obtain
\begin{align}
\mathrm{Vol}\left(\Biggl\{u\leq\frac{1-\frac{\mathcal{C}}{2t}}{1+\frac{\mathcal{C}}{2t}}\Biggr\}\right)%&= 
= \int\limits_{\mathcal{C}/2}^t\frac{\mathcal{C}}{s^2}\,\bigg(\!1+\frac{\mathcal{C}}{2s}\bigg)^{\!-2}\Bigg(\,\int\limits_{\Sigma_s} \frac{1}{\vert  \nabla u\vert}\,d\sigma\Bigg) \,ds \geq \int\limits_{\mathcal{C}/2}^t 4\pi s^{2}\bigg(\!1+\frac{\mathcal{C}}{2s}\bigg)^{\!6}\,ds\,.
\end{align}
We assume that there exists some $t_0\in(\mathcal{C}/2, + \infty)$ such that the equality holds in the previous inequality. Then, the equality in inequality \eqref{fffeq13} is true for almost every $s\in [\mathcal{C}/2, t_0]$.
In particular, there exists $t_1\in [\mathcal{C}/2, t_0]$ such that $G(t_1)=0$. Thus, the rest of claim $(c)$ follows from the proof of the equality case in point $(b)$.

\end{proof}

\begin{remark}
The statements of Theorem \ref{mainTh} continue to be true if we replace the assumption that $\partial M$ is minimal with the existence of some $\alpha\in\big(\!-(2\mathcal{C})^{-1},\,(2\mathcal{C})^{-1}\big]$ such that 
$ \HHH\leq \alpha\big(1-4\mathcal{C}\vert  \nabla u \vert\,\big)$ on $\partial M$, by \cite{Or_1}.
\end{remark}

\begin{ackn} The author deeply thanks Carlo Mantegazza for his thorough reading of the manuscript and for
 the many precious suggestions.
The author is a member of the INDAM--GNAMPA. 
\end{ackn}

\bibliographystyle{amsplain}
\bibliography{biblio}
 
\end{document}